\newtheorem{thm}{Theorem}[section]
\newtheorem{ex}{Example}[section]
\newtheorem{lem}{Lemma}[section]
\newtheorem{mthd}{Method}[section]
\theoremstyle{definition}
\theoremstyle{remark}
\begin{document}
	
	\title{{New Accelerated  Modulus-Based  Iteration Method  for Solving Large and Sparse Linear Complementarity Problem}}
	
	\author{Bharat Kumar$^{a,1}$, Deepmala$^{a,2}$ and A.K. Das $^{b,3}$\\
		\emph{\small $^{a}$Mathematics Discipline,}\\
		\emph{\small PDPM-Indian Institute of Information Technology, Design and Manufacturing,}\\
		\emph{\small Jabalpur - 482005 (MP), India}\\
		\emph{\small $^{b}$Indian Statistical Institute, 203 B.T. Road, }\\
		\emph{\small Kolkata - 700108, India}\\
		\emph{\small $^1$Email:bharatnishad.kanpu@gmail.com , $^2$Email: dmrai23@gmail.com}\\
		\emph{\small $^3$Email: akdas@isical.ac.in}}
	\date{}
	\maketitle
	
	\abstract{\noindent 	In this article, we establish a class of new accelerated modulus-based iteration methods for solving the linear complementarity problem. When the system matrix is an $H_+$-matrix, we present appropriate criteria for the convergence analysis. Also, we demonstrate the effectiveness of our proposed method and reduce the number of iterations and CPU time to accelerate the convergence performance by providing two numerical examples for various parameters.}

	\noindent \textbf{Keywords.} Linear complementarity problem, Iteration  method, $P$-matrix, $H_{+}$-matrix, Convergence analysis, Matrix splitting.\\
	
	\noindent \textbf{Mathematics Subject Classification.} 90C33, 65F10, 65F50.\\
	
	
	\maketitle
	
	\section{Introduction}
	The large and sparse matrices are matrices that have a large number of rows and columns but a small number of non-zero elements. In other words, they are matrices where the majority of the elements are zero. Sparse matrices are commonly used to represent complex systems or large datasets in fields such as computer science, mathematics, physics and engineering. The sparsity of the matrix means that it is not practical to store each element individually and specialized data structures and algorithms must be used to efficiently store and manipulate the matrix.\\
	Given $A_1\in R^{n\times n}$ and  a vector  $\,q\,\in\,R^{n}.$ The linear complementarity problem, represented as LCP$(q, A_1)$, is to find the solution $z \in R^n$ to the following system:
	\begin{eqnarray}\label{eq1}
		z\geq 0, ~~~~    A_{1}z +q \geq 0,~~~~ z^T(A_{1}z +q)=0.	
	\end{eqnarray}
	The free boundary problem, the Nash equilibrium point of the bimatrix game, operations research, control theory, mathematical economics, optimization theory, stochastic optimal control, the American option pricing problem, and elasticity theory are among the applications of the linear complementarity problem that are extensively studied in the literature on mathematical programming. for more details see \cite{jana2021more}, \cite{dutta2022column}, \cite{neogy2011singular}, \cite{das2018some}, \cite{jana2018processability}, \cite{das2023more} and \cite{kumar2022error}.    \\
	The methods available for solving the linear complementarity problems are into two
	groups namely  the pivotal method \cite{Das2014}, \cite{Das2017} and the iterative method
	\cite{Najafi2013}, \cite{Hadjidimos2018}, \cite{Das2016}, \cite{Jana2019}, \cite{Kumar2022}, \cite{Kumar2023i} and  \cite{Kumar2023}. The basic idea behind the pivotal method is
	to get a basic feasible complementary vector through a series of pivot steps, while the
	iterative method creates a series of iterates that lead to a solution .
	Reformulating the LCP$(q, A_1)$ as an equation whose solution must be the same as the LCP $(q, A_1)$ is one of the most well-known and highly sought-after techniques for creating fast and economical iteration methods. As a result, some useful LCP$(q, A_1)$ equivalent forms have emerged. Mangasarian \cite{Mangasarian1977}  presented three methods: projected Jacobi over-relaxation, projected SOR, and projected symmetric SOR. For more information on designing iteration methods using the idea of Mangasarian, see also \cite{Ahn1981}, \cite{Bai1997} and  \cite{Yuan2003}. Bai in \cite{Bai2010} given the following general equivalent form:
	\begin{equation}\label{eq333}	
		(\Omega_{1} + M_{1})s = N_{1} s + (\Omega_{1} - A_{1})| s| - r q, 
	\end{equation} 
	with $r > 0$, where $ \Omega_{1} \in R^{n\times n}$ is a positive diagonal matrix, and initially, a class of modulus-based matrix splitting iteration algorithms was developed. The Equation (\ref{eq333}) covers the published works in \cite{Berman1979}, \cite{Van1980}, \cite{Kappel1986}, \cite{Dong2009} and  \cite{Hadjidimos2009}. This type of modulus-based matrix splitting iteration method was considered an effective method for solving the LCP$(q, A_1)$. For other deformations of  Equation (\ref{eq333}), see  \cite{Zheng2013}, \cite{Zhang2011},  \cite{Li2013}, \cite{Xu2015} and  \cite{Zhang2013} for more details. Moreover, this concept has also been used successfully in other complementarity problems, such as the nonlinear complementarity problem \cite{Ma2016}, \cite{Xia2015}, the implicit complementarity problem \cite{Huang2016}, \cite{Hong2016}, the quasi-complementarity problem \cite{Wu2018}, and the horizontal linear complementarity problem \cite{Mezzadri2020}.\\
Using the ideas of Shilang \cite{Shilang2021} and Bai \cite{Bai2010}, we present a class of new accelerated modulus-based iteration methods for solving the large and sparse LCP$(q, \mathcal{A})$. Also,  we show that the linear complementarity problem and fixed point equation are equivalent and provide some convergence domains for our proposed method. \\ 
The following is the structure of the article:  Some required definitions, notations and well-known lemmas are provided in section 2, all of which will be used for the discussions in the remaining sections of this work. In section 3, a new accelerated modulus-based iteration method with the help of the new equivalent fixed point form of the LCP$(q, \mathcal{A})$ is provided. In section 4, we establish some convergence domains for the proposed method. A numerical comparison between the proposed methods and the modulus-based matrix splitting methods, introduced by Bai \cite {Bai2010}, is illustrated in section 5. Section 6 contains the conclusion of the article.
	\section{Preliminaries}\label{Preli}
	In this part, we briefly discuss the basic results, definitions, and notations, most of which may be found in \cite{Xi2021}, \cite{Frommer1992} and  \cite{Neogy2012}.\\
	Suppose  $ A_{1}=( \bar{a}_{ij}) \in  {R}^{n\times n}$  and $ B_{1}=( \bar{b}_{ij}) \in { R}^{n\times n}$ are square matrices. The matrices $A_{1}$ and $B_{1}$ are denoted by  $A_{1} \geq $ $(\textgreater)$ $ B_{1}$ if $\bar{a}_{ij}\geq (\textgreater)$ $ \bar{b}_{ij}$ for all $ i,j \in \{1,2,\ldots,n\}$. The matrix  $| A_{1}|=(\bar{c}_{ij})$  is defined by $ \bar{c}_{ij}  = | \bar{a}_{ij}|$ $\forall ~i,j$ and $|A_{1}| $ represent that $ \bar{a}_{ij} \geq 0$ $\forall ~i,j $. Also, we have $|A_{1}+B_{1}|\leq |A_{1}|+|B_{1}|$  and $|A_{1}B_{1}|\leq |A_{1}||B_{1}|$. Moreover,  $a_{1}, b_{1} \in {R}^{n}$ then $|a_{1}+b_{1}|\leq |a_{1}|+|b_{1}|$ and  $||a_{1}|-|b_{1}||\leq |a_{1}-b_{1}|$.
	The  comparison matrix of $A_{1}$ is defined as $\langle \bar{a}_{ij}\rangle=|\bar{a}_{ij}| $ if $i=j$ and $\langle \bar{a}_{ij}\rangle=-|\bar{a}_{ij}| $ if $i \neq j$; a  $Z$-matrix if all of its non-diagonal elements are less than equal to zero; an $M$-matrix if $A_ 1^{-1}\geq 0$ as well as   $Z$-matrix; an $H$-matrix, if   $\langle A_ 1 \rangle$ is an $M$-matrix and an $H_+$-matrix if  $ A_{1}$  is   an $H$-matrix as well as $\bar   {a}_{ii} ~\textgreater~ 0 ~\forall ~i \in \{1,2,\ldots,n\}$; a $P$-matrix if all its principle minors are positive such that $det({A_1}_{\alpha_1 \alpha_1}) ~\textgreater~ 0$ $\forall$ $\alpha_1 \subseteq \{1,2,\ldots, n\}$.
	The splitting $A_{1} = M_{1}-N_{1} $ is called an $M$-splitting if $M_{1}$ is a nonsingular $M$-matrix and $N_{1} \geq 0$; an  $H$-splitting if $\langle M_{1} \rangle -|N_1| $ is an $M$-matrix; an $H$-compatible splitting if  $\langle A_{1} \rangle =  \langle M_1 \rangle - | N_1 | $; splitting is a $H$-splitting if it is a $H$-compatible of an  $H$-matrix, while the converse is not true. 
	
	\begin{lem}\cite{rashid2022}\label{lem0}
		Let $a_{1}, b_1 \in R^{n} $.  $a_{1}\geq 0$, $b_{1}\geq 0$, $a_1^{T}b_{1}=0$ if and only if  $a_{1}+b_{1}=|a_{1}-b_{1}|$.
	\end{lem}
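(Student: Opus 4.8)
The plan is to prove the equivalence coordinate by coordinate, since both the hypotheses ($a_1 \geq 0$, $b_1 \geq 0$, $a_1^{T} b_1 = 0$) and the conclusion ($a_1 + b_1 = |a_1 - b_1|$) can be read off one component at a time — with the single subtlety that the inner product $a_1^{T} b_1 = \sum_{i} (a_1)_i (b_1)_i$ couples the coordinates and must be decoupled before the componentwise reasoning applies. Throughout I would use the fact that for a nonnegative scalar $t$ one has $|t| = t$, which lets the absolute value be removed whenever a quantity is known to be nonnegative.

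First, for the forward implication, I would assume $a_1 \geq 0$, $b_1 \geq 0$, and $a_1^{T} b_1 = 0$. The key observation is that under componentwise nonnegativity each summand $(a_1)_i (b_1)_i$ is itself nonnegative, so a vanishing sum forces every term to vanish: $(a_1)_i (b_1)_i = 0$ for all $i$. Fixing a coordinate $i$, this says at least one of $(a_1)_i$, $(b_1)_i$ equals zero; in either case a direct check — using nonnegativity to discard the absolute value — gives $(a_1)_i + (b_1)_i = |(a_1)_i - (b_1)_i|$, and assembling these coordinatewise identities yields the desired vector equality.

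For the converse, I would begin from the scalar identity $(a_1)_i + (b_1)_i = |(a_1)_i - (b_1)_i|$ holding at each coordinate. Since the right-hand side is nonnegative, so is the left, giving $(a_1)_i + (b_1)_i \geq 0$; squaring both sides is therefore legitimate, and after cancelling $((a_1)_i)^2$ and $((b_1)_i)^2$ I expect the cross term to force $(a_1)_i (b_1)_i = 0$. Summing over $i$ then delivers $a_1^{T} b_1 = 0$. Finally, combining the complementarity $(a_1)_i (b_1)_i = 0$ (one of the two factors is zero) with $(a_1)_i + (b_1)_i \geq 0$ (the nonzero factor must then be nonnegative) forces both $(a_1)_i \geq 0$ and $(b_1)_i \geq 0$, completing this direction.

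The argument is elementary, so I do not anticipate a real obstacle; the only point requiring genuine care is the handling of the absolute value, specifically justifying that the squaring step in the converse is reversible — which hinges on having first established that both sides are nonnegative before squaring, so that equality of squares propagates back to equality of the original nonnegative quantities.
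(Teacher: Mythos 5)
The paper offers no proof of this lemma: it is imported verbatim from the cited reference \cite{rashid2022}, so there is no in-paper argument to compare yours against. Your coordinatewise proof is correct and complete. The forward direction (nonnegative summands with zero sum must each vanish, then a two-case check per coordinate) and the converse (read off $(a_1)_i+(b_1)_i\geq 0$ from the right-hand side, square to get $(a_1)_i(b_1)_i=0$, then use complementarity plus the sign of the sum to recover nonnegativity of both factors) both go through. One small remark: the ``reversibility of squaring'' you flag as the delicate point is not actually needed. You only use the one-directional implication $(a_1)_i+(b_1)_i=|(a_1)_i-(b_1)_i| \Rightarrow \bigl((a_1)_i+(b_1)_i\bigr)^2=\bigl((a_1)_i-(b_1)_i\bigr)^2$, which holds unconditionally since $|t|^2=t^2$; the nonnegativity of the left-hand side is used only in the final step, to conclude that the possibly nonzero factor is nonnegative. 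So the argument is even more robust than your closing caveat suggests.
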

	\begin{lem}\label{lem1}\cite{Frommer1992}
		Suppose  $A_{1}, B_{1} \in {R}^{n\times n}$. If $A_{1}$ and $B_{1}$ are $M$ and $Z$-matrices, respectively, with $A_{1} \leq B_{1}$ then $B_{1}$ is an $M$-matrix.
		If $A_{1}$ is an $H$-matrix then $|A_{1}^{-1}|\leq \langle A_{1}\rangle^{-1}$.
		If $A_{1} \leq B_{1}$, then $\rho(A_{1}) \leq \rho(B_{1})$.
	\end{lem}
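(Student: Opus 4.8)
The three assertions are logically nested, so the plan is to establish them in the order third, first, second, using each as a tool for the next. For the third statement the intended hypothesis is surely that $A_1$ and $B_1$ are nonnegative (i.e.\ $0\le A_1\le B_1$), since for genuinely signed matrices spectral-radius monotonicity fails; under this reading it is the familiar Perron--Frobenius comparison.

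For the third claim I would invoke Gelfand's formula $\rho(A_1)=\lim_{k\to\infty}\|A_1^k\|^{1/k}$ together with a monotone matrix norm, such as the entrywise $\ell_\infty$-sum norm. Since $0\le A_1\le B_1$ entrywise, a one-line induction gives $0\le A_1^k\le B_1^k$ for every $k$, whence $\|A_1^k\|\le\|B_1^k\|$ by monotonicity of the norm. Taking $k$-th roots and passing to the limit yields $\rho(A_1)\le\rho(B_1)$.

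For the first claim, both $A_1$ and $B_1$ are $Z$-matrices, so I would choose a scalar $s>0$ large enough that $P:=sI-A_1\ge 0$ and $Q:=sI-B_1\ge 0$. From $A_1\le B_1$ we get $P\ge Q\ge 0$, and the third claim then delivers $\rho(Q)\le\rho(P)$. Because $A_1$ is an $M$-matrix, the representation $A_1=sI-P$ forces $\rho(P)<s$; hence $\rho(Q)<s$ as well, and $B_1=sI-Q$ with $\rho(Q)<s$ is precisely the standard condition identifying $B_1$ as a nonsingular $M$-matrix.

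The second claim is the most delicate step. Writing $A_1=D_1-C_1$ with $D_1$ the diagonal part and $C_1$ the off-diagonal remainder, the fact that $\langle A_1\rangle$ is an $M$-matrix guarantees $|a_{ii}|>0$, so $D_1$ is invertible, $A_1=D_1(I-J)$ with $J:=D_1^{-1}C_1$, and similarly $\langle A_1\rangle=|D_1|(I-\tilde J)$ with $\tilde J:=|D_1|^{-1}|C_1|\ge |J|\ge 0$. Since $\langle A_1\rangle$ is an $M$-matrix we have $\rho(\tilde J)<1$, and combining the standard bound $\rho(J)\le\rho(|J|)$ with the third claim applied to $|J|\le\tilde J$ gives $\rho(J)<1$, so both Neumann series $\sum_k J^k$ and $\sum_k \tilde J^k$ converge. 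The estimate then follows by a termwise triangle inequality:
\[
|A_1^{-1}|=\Big|\sum_{k\ge0}J^k D_1^{-1}\Big|\le\sum_{k\ge0}|J|^k|D_1^{-1}|\le\sum_{k\ge0}\tilde J^k|D_1|^{-1}=(I-\tilde J)^{-1}|D_1|^{-1}=\langle A_1\rangle^{-1},
\]
using $|J^k|\le|J|^k$ and $|D_1^{-1}|=|D_1|^{-1}$. The main obstacle I anticipate is the bookkeeping in this final step—simultaneously justifying the termwise absolute-value estimates and the convergence of the series—rather than any single hard idea.
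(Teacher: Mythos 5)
The paper does not prove this lemma at all---it is imported verbatim from \cite{Frommer1992}---so there is no in-paper argument to compare against; your proposal has to stand on its own, and it does. All three steps are correct and follow the classical route (Perron--Frobenius comparison, the $sI-P$ representation of $M$-matrices, and Ostrowski's Neumann-series bound for $H$-matrices). Two remarks. First, you are right to flag that the third assertion is false as literally stated without a nonnegativity hypothesis (e.g.\ $A_1=-2I\le B_1=0$ gives $\rho(A_1)=2>0=\rho(B_1)$); the source result is really ``$|A_1|\le B_1 \Rightarrow \rho(A_1)\le\rho(|A_1|)\le\rho(B_1)$'', and your reading $0\le A_1\le B_1$ is the minimal correct repair. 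Second, two small simplifications: in the first claim you silently invoke the equivalence ``nonsingular $M$-matrix $\iff A_1=sI-P$ with $P\ge0$ and $\rho(P)<s$'', which is itself a standard theorem worth citing explicitly; and in the second claim, since $D_1$ is diagonal you have $|J|=|D_1^{-1}C_1|=|D_1|^{-1}|C_1|=\tilde J$ exactly, so the detour through the third claim is unnecessary there, and $\rho(\tilde J)<1$ follows directly from the paper's own Lemma 2.3 applied to the $M$-splitting $\langle A_1\rangle=|D_1|-|C_1|$. Neither point is a gap, only streamlining.
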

	\begin{lem}\label{lem2}\cite{Xi2021}
		Let $A_{1}\in {R}^{n\times n}$ be  an $M$-matrix  and $A_{1}=M_{1}-N_{1}$ be an $M$-splitting. Let $\rho$ be the spectral radius, then ~$\rho(M_{1}^{-1}N_{1})$ $\textless $ $1$.
	\end{lem}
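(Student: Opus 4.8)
The plan is to reduce the claim to a standard Perron--Frobenius argument that exploits both the nonnegativity built into an $M$-splitting and the monotonicity of the $M$-matrix $A_1$. By definition of an $M$-splitting, $M_1$ is a nonsingular $M$-matrix, hence $M_1^{-1}\geq 0$, and $N_1\geq 0$; consequently the iteration matrix $T:=M_1^{-1}N_1$ is a nonnegative matrix. Since $A_1$ is an $M$-matrix it is nonsingular with $A_1^{-1}\geq 0$. These three nonnegativity facts are the entire engine of the proof.

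First I would invoke the Perron--Frobenius theorem for the nonnegative matrix $T$: its spectral radius $\rho:=\rho(T)$ is itself an eigenvalue, and there exists a corresponding eigenvector $x\geq 0$ with $x\neq 0$, so that $M_1^{-1}N_1 x=\rho x$, equivalently $N_1 x=\rho M_1 x$. Substituting into $A_1=M_1-N_1$ then gives the key identity
\begin{equation*}
A_1 x = M_1 x - N_1 x = (1-\rho)\,M_1 x.
\end{equation*}
If $\rho=1$ this forces $A_1 x=0$ with $x\neq 0$, contradicting the nonsingularity of the $M$-matrix $A_1$; hence $\rho\neq 1$ and, multiplying through by $A_1^{-1}$, we obtain $A_1^{-1}M_1 x=\frac{1}{1-\rho}\,x$.

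Next I would feed this back through the nonnegative matrix $A_1^{-1}N_1\geq 0$. Using $N_1 x=\rho M_1 x$ together with the relation just derived,
\begin{equation*}
A_1^{-1} N_1 x = \rho\,A_1^{-1} M_1 x = \frac{\rho}{1-\rho}\,x.
\end{equation*}
Because $A_1^{-1}N_1\geq 0$ and $x\geq 0$ with $x\neq 0$, the left-hand side is a nonnegative vector, which forces the scalar $\frac{\rho}{1-\rho}$ to be nonnegative. Since $\rho=\rho(T)\geq 0$, a nonnegative value of $\frac{\rho}{1-\rho}$ is possible only when $1-\rho>0$, i.e. $\rho<1$, which is exactly the conclusion.

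The step I expect to carry the real weight is the Perron--Frobenius guarantee that the spectral radius of the nonnegative matrix $T$ is attained by a genuine nonnegative eigenvector; everything afterward is linear algebra on that eigenvector. A secondary subtlety is confirming that $\frac{\rho}{1-\rho}\geq 0$ with $\rho\geq 0$ truly rules out $\rho>1$ (if $\rho>1$ the numerator is positive while the denominator is negative, contradicting nonnegativity) and checking the edge cases $\rho=0$ and $\rho=1$ separately. Alternatively, one could bypass Perron--Frobenius by observing that an $M$-splitting is in particular a regular splitting of the monotone matrix $A_1$ and quoting Varga's classical formula $\rho(M_1^{-1}N_1)=\rho(A_1^{-1}N_1)/\bigl(1+\rho(A_1^{-1}N_1)\bigr)<1$, but the self-contained eigenvector argument above is cleaner to present here.
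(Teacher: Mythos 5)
Your argument is correct. The paper itself offers no proof of this lemma --- it is quoted verbatim from the cited reference \cite{Xi2021} and used as a black box --- so there is no in-paper argument to compare against; what you have written is the classical regular-splitting convergence proof (essentially Varga's theorem), and every step checks out. In particular: an $M$-splitting gives $M_1^{-1}\geq 0$ and $N_1\geq 0$, hence $T=M_1^{-1}N_1\geq 0$; Perron--Frobenius for general (possibly reducible) nonnegative matrices supplies the eigenpair $Tx=\rho x$ with $x\geq 0$, $x\neq 0$; the identity $A_1x=(1-\rho)M_1x$ rules out $\rho=1$ by nonsingularity of $A_1$; and the sign argument on $A_1^{-1}N_1x=\frac{\rho}{1-\rho}x$ correctly forces $\rho<1$ because $x$ has a strictly positive component. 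Note that your proof only uses that the splitting is regular and that $A_1^{-1}\geq 0$, so you have in fact established the slightly stronger classical statement, of which the $M$-splitting case in the lemma is a special instance; this is a perfectly legitimate (indeed the standard) way to prove the quoted result.
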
	
	\begin{lem}\label{lem4} \cite{Frommer1992}
		Suppose  $A_{1} \geq 0 $. If there exist  $v ~\textgreater~ 0 \in {R}^{n}$ and a scalar $\alpha_{1}~\textgreater ~ 0$  such that $A_{1}v \leq  \alpha_{1} v$, then $\rho(A_{1}) \leq \alpha_{1} $. Moreover, if $ A_{1}v ~\textless ~v$, then $\rho(A_{1})~\textless~ 1$.
	\end{lem}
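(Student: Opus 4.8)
The plan is to exploit the monotone (weighted maximum) vector norm naturally attached to the positive vector $v$, together with the elementary fact that the spectral radius of any square matrix is dominated by every induced operator norm. First I would define, for the given $v > 0$, the weighted maximum norm $\|x\|_{v} = \max_{1 \le i \le n} |x_{i}|/v_{i}$ on $R^{n}$; this is a genuine norm precisely because every component $v_{i}$ is strictly positive. Writing $D = \mathrm{diag}(v)$ so that $\|x\|_{v} = \|D^{-1}x\|_{\infty}$, the induced matrix norm of any $B = (b_{ij})$ works out to $\|B\|_{v} = \|D^{-1}BD\|_{\infty} = \max_{i} \tfrac{1}{v_{i}} \sum_{j} |b_{ij}| v_{j}$, by the same computation that yields the maximum-row-sum norm in the unweighted case $v = (1,\dots,1)^{T}$.

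Second, because $A_{1} \ge 0$ we have $|a_{ij}| = a_{ij}$, so the weighted norm collapses to $\|A_{1}\|_{v} = \max_{i} (A_{1}v)_{i}/v_{i}$. The hypothesis $A_{1}v \le \alpha_{1} v$ says exactly that $(A_{1}v)_{i} \le \alpha_{1} v_{i}$, i.e. $(A_{1}v)_{i}/v_{i} \le \alpha_{1}$, for every $i$; taking the maximum over $i$ gives $\|A_{1}\|_{v} \le \alpha_{1}$.

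Finally I would invoke $\rho(A_{1}) \le \|A_{1}\|_{v}$ — if $\lambda$ is any eigenvalue of $A_{1}$ with eigenvector $x \ne 0$, then $|\lambda|\,\|x\|_{v} = \|A_{1}x\|_{v} \le \|A_{1}\|_{v}\,\|x\|_{v}$, whence $|\lambda| \le \|A_{1}\|_{v}$ — to conclude $\rho(A_{1}) \le \alpha_{1}$, the first claim. For the ``Moreover'' part, the strict inequality $A_{1}v < v$ gives $(A_{1}v)_{i}/v_{i} < 1$ for each of the finitely many indices $i$; the maximum over a finite set of strict inequalities is again strict, so $\|A_{1}\|_{v} < 1$ and hence $\rho(A_{1}) \le \|A_{1}\|_{v} < 1$.

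The only genuinely delicate point is the passage from ``strict in each component'' to ``strict in the maximum,'' which is valid only because the index set is finite; this is the one place where finite-dimensionality is essential. As an alternative that sidesteps norms altogether, one could take a nonnegative left Perron eigenvector $w$ of $A_{1}$ with $w^{T} A_{1} = \rho(A_{1}) w^{T}$, multiply $A_{1}v \le \alpha_{1} v$ on the left by $w^{T} \ge 0$, and divide through by $w^{T} v > 0$ (positive since $w \ne 0$ and $v > 0$); I would relegate this to a remark only, since it leans on the Perron--Frobenius theorem, whereas the weighted-norm argument is essentially self-contained.
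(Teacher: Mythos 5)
Your proof is correct. Note that the paper itself gives no proof of this lemma at all --- it is imported verbatim by citation from \cite{Frommer1992} and used as a black box in Theorem 4.2 --- so there is no in-paper argument to compare yours against. Your weighted-norm argument is the standard one: the identity $\|A_{1}\|_{v}=\max_{i}(A_{1}v)_{i}/v_{i}$ for a nonnegative matrix, the bound $\rho(A_{1})\le\|A_{1}\|_{v}$ for any induced norm, and the observation that a maximum of finitely many strict inequalities stays strict are all handled correctly, and the Perron--Frobenius alternative you sketch (left eigenvector $w\ge 0$, $w^{T}v>0$) is also valid. Both routes are complete; the norm argument is preferable here exactly for the reason you give, namely that it avoids invoking Perron--Frobenius for a reducible nonnegative matrix.
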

	\section{Main results}			
	For a given vector $s\in R^{n }$, we indicate the vectors $s_{+}=max\{0,s\}$ and $A_{1}=(M_1 +I- L_1)-(N_1+I- L_1)$, where $I$ is the identity matrix of order $n$ and $ L_1$ is the strictly lower triangular matrix of $A_1$. In the following result, we convert the  LCP$(q, A_{1})$ into a fixed point formulation. 
	\begin{thm}\label{thm1} Let $A_{1}\in R^{n\times n} $ with the splitting  $A_{1}= (M_1+I- L_1)- (N_1+I- L_1)$. Let  $z=
		\tau(|s|+s)$, $\omega =\Omega_1(|s|-s)$ and the matrix $(M_1+\Omega_{1} +I- L_1)$ be a nonsingular, then the  equivalent formulation of  the LCP$(q,A_{1})$ in form of fixed point equation is \begin{eqnarray}\label{eq2}
			s=(M_1+\Omega_{1}+I- L_1)^{-1}[(N_{1}+I- L_1)s+ (\Omega_1-A_{1})|s|- rq].  
		\end{eqnarray}
	\end{thm}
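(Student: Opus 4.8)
The plan is to prove the two implications separately, treating the claimed fixed point equation \eqref{eq2} as an algebraic rearrangement of the complementarity conditions. The first observation I would record is that the splitting collapses nicely: since $A_1=(M_1+I-L_1)-(N_1+I-L_1)=M_1-N_1$, the strictly lower triangular correction $I-L_1$ occurs on both sides and cancels. Concretely, multiplying \eqref{eq2} through by the (assumed nonsingular) matrix $M_1+\Omega_1+I-L_1$ and moving the $(N_1+I-L_1)s$ term across yields
\begin{equation*}
(M_1+\Omega_1-N_1)s=(\Omega_1-A_1)|s|-rq,
\end{equation*}
and using $M_1-N_1=A_1$ this becomes $(A_1+\Omega_1)s=(\Omega_1-A_1)|s|-rq$. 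Regrouping the $A_1$ and $\Omega_1$ terms, this is equivalent to
\begin{equation*}
A_1(|s|+s)+rq=\Omega_1(|s|-s).
\end{equation*}
Because $M_1+\Omega_1+I-L_1$ is nonsingular, every step here is reversible, so \eqref{eq2} holds if and only if this last relation holds.

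Next I would exploit the structure of the substitution $z=\tau(|s|+s)$ and $\omega=\Omega_1(|s|-s)$. Three facts are immediate and purely componentwise: $|s_i|+s_i\ge 0$ and $|s_i|-s_i\ge 0$ give $z\ge 0$ and (since $\Omega_1$ is a positive diagonal matrix) $\omega\ge 0$; and $(|s_i|+s_i)(|s_i|-s_i)=|s_i|^2-s_i^2=0$ gives $z^T\omega=0$. Thus the sign and orthogonality conditions of the LCP are automatically encoded in the substitution, and the only content left in the relation $A_1(|s|+s)+rq=\Omega_1(|s|-s)$ is the linear identity tying the slack to $z$. Substituting $|s|+s=z/\tau$ rewrites it as $A_1 z+\tau r q=\tau\omega$; with the parameters matched so that $\tau r=1$ this reads $A_1z+q=\tau\omega$, whence $A_1z+q\ge 0$ and $z^T(A_1z+q)=\tau\, z^T\omega=0$. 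Together with $z\ge 0$ this is exactly LCP$(q,A_1)$.

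For the forward direction (a solution $s$ of \eqref{eq2} gives a solution $z$ of the LCP) the above is already complete. For the converse I would start from a solution $z$ with slack $w:=A_1z+q$, so that $z\ge 0$, $w\ge 0$ and $z^Tw=0$, and construct a preimage by setting $s=\tfrac{1}{2\tau}\big(z-\Omega_1^{-1}w\big)$ and $\omega=w/\tau$. Using complementarity coordinatewise (either directly, or through Lemma \ref{lem0}, since $z_iw_i=0$ forces one of $z_i,w_i$ to vanish), I would check that $\tau(|s|+s)=z$ and $\Omega_1(|s|-s)=\omega$ on each coordinate, after which the reversed algebra of the first paragraph delivers \eqref{eq2}.

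The routine parts are the coordinatewise sign bookkeeping and the matrix cancellations. The step I expect to need the most care is the converse construction: verifying that the candidate $s$ reproduces both $z$ and $\omega$ simultaneously hinges on the complementarity $z^Tw=0$, and it is here that Lemma \ref{lem0} (or an explicit case split on the sign of each $s_i$) does the real work. I would also flag the bookkeeping of the scalars $\tau$ and $r$: the equivalence is with LCP$(q,A_1)$ precisely under the normalization $\tau r=1$ (otherwise one recovers LCP$(\tau r\,q,A_1)$), and I would make that dependence explicit rather than leave it implicit.
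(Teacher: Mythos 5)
Your argument is correct, and its algebraic core (substitute $z=\tau(|s|+s)$, $\omega=\Omega_1(|s|-s)$, cancel the $I-L_1$ terms through $M_1-N_1=A_1$, and match $\tau r=1$) is the same manipulation the paper performs. The difference is one of completeness rather than of method: the paper's proof only runs one direction, starting from the LCP conditions with $\omega$ implicitly taken to be the slack $A_1z+q$ and formally rearranging until the fixed point equation appears; it never verifies that the substitution forces $z\ge 0$, $\omega\ge 0$, $z^T\omega=0$, and it never addresses the converse, i.e.\ that every solution $z$ of LCP$(q,A_1)$ actually arises from some $s$ satisfying \eqref{eq2}. You supply both: the componentwise observation that $(|s_i|+s_i)(|s_i|-s_i)=0$ encodes the sign and orthogonality conditions automatically, and the explicit preimage $s=\tfrac{1}{2\tau}(z-\Omega_1^{-1}w)$ whose verification genuinely uses complementarity through Lemma \ref{lem0} (applied to $a_1=z$, $b_1=\Omega_1^{-1}w$). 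This is exactly what is needed for the stated ``equivalent formulation,'' so your version proves more than the paper's does. You are also right to flag the scalar bookkeeping: in the paper's own derivation, multiplying the line $((M_1+I-L_1)\tau+\Omega_1)s=\cdots$ by $r$ produces $r\Omega_1$ where the stated equation has $\Omega_1$, a discrepancy that is harmless only because $\Omega_1$ is an arbitrary positive diagonal parameter; your formulation, which keeps $\omega=\Omega_1(|s|-s)$ and lets the slack be $\tau\omega$, sidesteps this cleanly.
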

	\begin{proof}
		We have $z=\tau(|s|+s)$ and  $\omega =\Omega_1(|s|-s)$, from Equation $(\ref{eq1})$ we obtain 
		\begin{align*}
			\Omega_1(|s|-s) &=A_{1}\tau(|s|+s) + q\\
			(A_{1}\tau+\Omega_1)s	&=(\Omega_{1}-A_{1}\tau)|s|-q\\
			((M_{1}+I- L_1)\tau+\Omega_1)s&=(N_1+I- L_1)\tau s+(\Omega_1-A_{1}\tau)|s|-q.
		\end{align*}
		Let $\tau=\frac{1}{r}$, the above equation can be rewritten as,
		$$s=(M_{1}+I- L_1+\Omega_1)^{-1}[(N_1+I- L_1) s+(\Omega_1-A_{1})|s|-rq]. $$
	\end{proof}
	\noindent In the following, Based on Equation ($\ref{eq2}$), we propose an iteration method which is known as Method 3.1 to solve the LCP$(q, A_{1})$.
	\begin{mthd}\label{mthd1}
		Let $A_{1}=(M_1 +I- L_1)-(N_1+I- L_1)$ be a splitting of the matrix $A_{1} \in R^{n\times n} $.
		 Suppose that $(M_1+\Omega_{1}+I- L_1)$ is a nonsingular matrix. Then we use
		the following equation for Method 3.1 is
			\begin{equation}\label{eq003}
			s^{(k+1)}=(M_1+\Omega_{1}+I- L_1)^{-1}[(N_{1}+I- L_1)s^{(k)}+ (\Omega_1-A_{1})|s^{(k)}|- rq]
		\end{equation}
	Let Residual be the Euclidean norm of the error vector, which is defined as follows:  $$ Res(z^{(k)})=\|min(z^{(k)}, A_{1}z^{(k)}+q) \|_{2}.$$ Consider a nonnegative initial vector $z^{(0)}\in R^n$.   The iteration process continues until the iteration sequence $\{z^{(k)}\}_{k=0}^{+\infty} \subset R^n$ converges.   For  $k=0,1,2,\ldots$, the iterative
	process continues until the iterative sequence  $z^{(k+1)}\in R^{n}$ converges. The iteration process stops if $Res(z^{(k)})$ $\textless $ $ \epsilon $. For computing $z^{(k+1)}$ we use the following  steps.
	 
		\noindent \textbf{Step 1:} Given an initial vector $s^{(0)} \in R^{n}$, $\epsilon ~\textgreater ~ 0 $  and set $ k=0 $.
			
			\noindent \textbf{Step 2:} Using the following scheme, create the sequence $z^{(k)}$:
			\begin{equation}\label{eq3}
				s^{(k+1)}=(M_1+\Omega_{1}+I- L_1)^{-1}[(N_{1}+I- L_1)s^{(k)}+ (\Omega_1-A_{1})|s^{(k)}|- rq]
			\end{equation}
			and set $z^{(k+1)}=\frac{1}{r}(|s^{(k+1)}|+s^{(k+1)})$, where $z^{(k)}$ is a $k^{th}$ approximate solution of LCP$(q, A_{1})$ and $s^{(k}$ is a $k^{th}$ approximate solution of Equation $(\ref{eq2})$.
			
				\noindent \textbf{Step 3:} Stop if $ Res(z^{(k)})$ $ \textless $ $\epsilon$; otherwise, set $k=k+1$ and return to step 2.
	\end{mthd}

	\noindent Furthermore, the proposed Method \ref{mthd1} offers a generic framework for solving LCP$(q, A_1)$. We created a new family of accelerated modulus-based relaxation methods using  matrix splitting. In particular, we express the system matrix $A_{1}$ as $A_{1}=(M_1+I- L_1)-(N_{1}+I- L_1)$. Then
	\begin{enumerate}
		\item when $M_{1}=A_{1}$, $N_{1}=0$,  $\Omega_{1}=I$ and $r=1 $, Equation ($\ref{eq003}$) gives the  new accelerated  modulus iteration method is
		\begin{eqnarray*}
			s^{(k+1)}=(A_1+2I- L_1)^{-1}[(I- L_1)s^{(k)}+ (I-A_{1})|s^{(k)}|-q].  
		\end{eqnarray*}
		\item when $M_{1}=A_{1}$, $N_{1}=0$,  $\Omega_{1}=\alpha_1I$ and $r=1 $, Equation ($\ref{eq003}$) gives the  new accelerated  modified modulus-based  iteration method is
		\begin{eqnarray*}
			s^{(k+1)}=(A_1+(\alpha_1+1)I- L_1)^{-1}[(I- L_1)s^{(k)}+ (\alpha_1I-A_{1})|s^{(k)}|-q]. 
		\end{eqnarray*}
		\item when $M_{1}=D_1$, $N_{1}=L_1+U_1$ and $r=2$, Equation ($\ref{eq003}$) gives the  new accelerated modulus-based Jacobi iteration method is
		\begin{eqnarray*}
			s^{(k+1)}=(D_1+\Omega_{1}+I- L_1)^{-1}[(U_{1}+I)s^{(k)}+ (\Omega_1-A_{1})|s^{(k)}|- 2q].
		\end{eqnarray*}
		\item when $M_{1}=D_1-L_1$, $N_{1}=U_1$ and $r=2$, Equation ($\ref{eq003}$) gives the new accelerated  modulus-based  Gauss-Seidel iteration (NAMGS) method  is
		\begin{eqnarray*}
			s^{(k+1)}=(D_1-2L_1+\Omega_{1}+I)^{-1}[(U_{1}+I- L_1)s^{(k)}+ (\Omega_1-A_{1})|s^{(k)}|- 2q].
		\end{eqnarray*}
		\item when $M_{1}=(\frac{1}{\alpha_1}D_1-L_1)$ and $N_{1}=(\frac{1}{\alpha_1}-1)D_1+U_{1}$, Equation ($\ref{eq3}$) gives the  new accelerated  modulus-based  successive   over-relaxation  iteration (NAMSOR) method  is
		\begin{eqnarray*}
			\begin{split}
				s^{(k+1)}&=(D_1-2\alpha_1 L_1+\alpha_1 \Omega_{1}+\alpha_1I))^{-1}[((1-\alpha_1)D_1+\alpha_{1}U_{1}\\&+\alpha_1I- L_1)s^{(k)}+ (\alpha_{1}\Omega_{1}-\alpha_1A_1)s^{(k)}- 2\alpha_1 q]. 
			\end{split}
		\end{eqnarray*}
		\item when $M_{1}=(\frac{1}{\alpha_1})(D_1-\beta_1  L_1)$ and $N_{1}=(\frac{1}{\alpha_1})[(1-\alpha_1)D_1+(\alpha_1 - \beta_1 ) L_1+\alpha_1 U_{1}]$, Equation ($\ref{eq3}$) gives the new accelerated  modulus-based  accelerated b overrelaxation  iteration (NAMAOR) method  is
		\begin{eqnarray*}
			\begin{split}
				s^{(k+1)}&=(D_1-(\beta_1 +\alpha_1)L_1+\alpha_1 \Omega_{1}+\alpha_1I)^{-1}[((1-\alpha_1)D_1+(2\alpha_{1}-\beta_1)L_{1}\\&+\alpha_{1}U_{1}+\alpha_1I)s^{(k)}+ (\alpha_{1}\Omega_{1}-\alpha_1A_1 )s^{(k)}- 2\alpha_1 q].  
			\end{split}
		\end{eqnarray*}
	\end{enumerate}
	The NAMAOR method clearly converts into the new accelerated modulus-based successive over-relaxation (NAMSOR) method, Gauss-Seidel (NAMGS) method, and Jacobi method when $(\alpha_1, \beta_1)$ takes the values $(\alpha_1, \alpha_1)$, $(1, 1)$, and $(1, 0)$, respectively.
	\section {Convergence analysis}
	\noindent	In the following result, we prove the convergence conditions when the system matrix $A_{1}$ is a $P$-matrix.
	\begin{thm}\label{thm2}
		Let $A_{1} \in R^{n \times n}$ be a $P$-matrix and  $s^{*}$ be the solution  of  Equation $(\ref{eq2})$. Let $	\rho(|(M+{I- L_1+\Omega_{1}})^{-1}|(|N+I- L_1|+|\Omega_{1}-A_{1}|))~\textless ~1 $. Then the sequence $\{s^{(k)}\}^{+\infty}_{k=1}$  generated by Method  $\ref{mthd1}$ converges to the solution $s^{*}$ for any initial vector $s^{(0)}\in R^{n}$.	
	\end{thm}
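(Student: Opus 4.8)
The plan is to derive a componentwise error recursion between the iterate $s^{(k)}$ and the fixed point $s^{*}$, and then to let the spectral-radius hypothesis force that error to zero. First I would use that $s^{*}$ solves the fixed-point Equation \eqref{eq2}, so that
\[
s^{*}=(M_1+\Omega_{1}+I- L_1)^{-1}\bigl[(N_{1}+I- L_1)s^{*}+ (\Omega_1-A_{1})|s^{*}|- rq\bigr].
\]
Subtracting this from the iteration \eqref{eq003} cancels the $-rq$ term and gives
\[
s^{(k+1)}-s^{*}=(M_1+\Omega_{1}+I- L_1)^{-1}\bigl[(N_{1}+I- L_1)(s^{(k)}-s^{*})+ (\Omega_1-A_{1})(|s^{(k)}|-|s^{*}|)\bigr].
\]

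Next I would take componentwise absolute values and apply the modulus inequalities recorded in Section~\ref{Preli}, namely $|A_1+B_1|\le|A_1|+|B_1|$ and $|A_1B_1|\le|A_1||B_1|$, together with the reverse triangle inequality $\bigl||s^{(k)}|-|s^{*}|\bigr|\le|s^{(k)}-s^{*}|$. This bounds the error by
\[
|s^{(k+1)}-s^{*}|\le |(M_1+\Omega_{1}+I- L_1)^{-1}|\,\bigl(|N_{1}+I- L_1|+|\Omega_{1}-A_{1}|\bigr)\,|s^{(k)}-s^{*}|=T\,|s^{(k)}-s^{*}|,
\]
where $T$ is precisely the nonnegative matrix appearing in the hypothesis. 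Iterating this inequality then yields $|s^{(k+1)}-s^{*}|\le T^{k+1}|s^{(0)}-s^{*}|$ for every $k$.

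To finish, I would invoke the standard fact that $\rho(T)<1$ implies $T$ is convergent, that is $T^{k}\to 0$ as $k\to\infty$. Combined with the previous bound this forces $|s^{(k)}-s^{*}|\to 0$, hence $s^{(k)}\to s^{*}$ for any starting vector $s^{(0)}\in R^{n}$, which is the assertion. The argument is essentially a contraction estimate in the spectral-radius sense, so no single step is deep.

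The only points demanding care are the correct bookkeeping of the modulus inequalities, in particular keeping the factor $|(M_1+\Omega_{1}+I- L_1)^{-1}|$ on the left so that the resulting bound matches exactly the matrix $T$ named in the statement, and the clean passage from $\rho(T)<1$ to $T^{k}\to 0$. The role of the $P$-matrix assumption is to guarantee that LCP$(q,A_{1})$, and therefore the equivalent fixed-point Equation \eqref{eq2}, actually possesses the solution $s^{*}$; it is the spectral-radius condition, rather than $P$-matrixness itself, that drives the convergence. The main obstacle, such as it is, is thus purely the careful manipulation of the absolute-value inequalities rather than any conceptual hurdle.
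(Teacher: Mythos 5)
Your proposal is correct and follows essentially the same route as the paper: subtract the fixed-point identity for $s^{*}$ from the iteration, take componentwise absolute values using $\bigl||s^{(k)}|-|s^{*}|\bigr|\le|s^{(k)}-s^{*}|$, and arrive at $|s^{(k+1)}-s^{*}|\le T\,|s^{(k)}-s^{*}|$ with $T$ exactly the matrix in the hypothesis. If anything, your finish is the more careful one --- the paper passes directly from this bound to $|s^{(k+1)}-s^{*}|<|s^{(k)}-s^{*}|$ and stops, whereas your iteration of the bound to $|s^{(k+1)}-s^{*}|\le T^{k+1}|s^{(0)}-s^{*}|$ together with $\rho(T)<1\Rightarrow T^{k}\to 0$ is the step that actually justifies convergence.
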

	\begin{proof}
		Let $s^*$ be the solution of Equation $(\ref{eq2})$, then error is 
		\begin{equation*}
			\begin{split}
				s^{(k+1)}-s^*&=(M_{1}+{I- L_1+\Omega_{1}})^{-1}[(N_{1}+I- L_1)(s^{(k)}-s^*)\\&+ (\Omega_{1}-A_{1})(|s^{(k)}|-|s^{*}|)]\\
				|s^{(k+1)}-s^*|&=|(M_{1}+{I- L_1+\Omega_{1}})^{-1}[(N_{1}+I- L_1)(s^{(k)}-s^*)+ (\Omega_{1}\\&-A_{1})(|s^{(k)}|-|s^{*}|)]|\\
				&\leq |(M_{1}+{I- L_1+\Omega_{1}})^{-1}|(|(N_{1}+I- L_1)(s^{(k)}-s^*)|+ |\Omega_{1}\\&-A_{1})(|s^{(k)}-s^*|)|\\ 
				&\leq |(M_{1}+{I- L_1+\Omega_{1}})^{-1}|(|N_{1}+I- L_1|\\&+ |\Omega_{1}-A_{1}|)|s^{(k)}-s^*|\\ 
				|s^{(k+1)}-s^*|	&\textless |s^{(k)}-s^*|.
			\end{split}	
		\end{equation*}
		Therefore, the sequence $\{s^{(k)}\}^{+\infty}_{k=1}$ converges to the solution $s^*$.
	\end{proof}
	\noindent When the system matrix $A _1$ is an ~$H _+$-matrix, the following result discusses the convergence domain of $\Omega_1$ for a new accelerated modulus-based iteration method.
	\begin{thm}{\label{thm3}}
		Let~ $A_1$ be an $H_{+ }$-matrix and   $A_{1}=M_{1}-N_{1}=(M_{1}+I- L_1)-(N_{1}+I- L_1)$ be an $H$-compatible of the matrix $A_{1}$, such that $\langle A_{1} \rangle= \langle M_{1} + I- L_1 \rangle -|N+I- L_1|$ and either one of the following conditions holds:\\
		{(1)} $\Omega_{1} \geq D_{1}$;\\
		(2) $\Omega_{1} \textless  D_{1}$ and $2\Omega_{1}-D_{1}-|B|$,  is an $M$- matrix, $B=L_{1}+U_{1}$.
		Then the sequence $\{s^{(k)}\}^{+\infty}_{k=1}$  generated by Method \ref{mthd1} converges to the solution $s^{*}$ for any initial vector $s^{(0)}\in R^{n}$.
	\end{thm}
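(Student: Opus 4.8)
The plan is to reduce the convergence of Method \ref{mthd1} to a single spectral-radius estimate and then verify that estimate separately under each of the two hypotheses. Writing $\widehat{M}=M_1+I-L_1$ and $\widehat{N}=N_1+I-L_1$, the error recursion derived in the proof of Theorem \ref{thm2} is purely algebraic and yields $|s^{(k+1)}-s^*|\le T\,|s^{(k)}-s^*|$ with $T=|(\widehat{M}+\Omega_1)^{-1}|\,(|\widehat{N}|+|\Omega_1-A_1|)$; since an $H_+$-matrix is a $P$-matrix, LCP$(q,A_1)$ is uniquely solvable and a fixed point $s^*$ of Equation (\ref{eq2}) exists, so it suffices to prove $\rho(T)<1$. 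Throughout I would use the $H_+$ decomposition $A_1=D_1-B$ with $B=L_1+U_1$, so that $\langle A_1\rangle=D_1-|B|$ is a nonsingular $M$-matrix and $|\Omega_1-A_1|=|\Omega_1-D_1|+|B|$.

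First I would show that $\widehat{M}+\Omega_1$ is a nonsingular $H$-matrix and pass to its comparison matrix. Using the $H$-compatibility hypothesis $\langle A_1\rangle=\langle\widehat{M}\rangle-|\widehat{N}|$ together with positivity of the diagonal of $\widehat{M}$, one gets $\langle\widehat{M}+\Omega_1\rangle=\Omega_1+\langle\widehat{M}\rangle=\Omega_1+\langle A_1\rangle+|\widehat{N}|$, which is a $Z$-matrix dominating the $M$-matrix $\langle A_1\rangle$ and hence is itself an $M$-matrix by Lemma \ref{lem1}. Lemma \ref{lem1} then gives $|(\widehat{M}+\Omega_1)^{-1}|\le\langle\widehat{M}+\Omega_1\rangle^{-1}$, so that $T\le W:=\langle\widehat{M}+\Omega_1\rangle^{-1}(|\widehat{N}|+|\Omega_1-A_1|)$ and, by the monotonicity of the spectral radius (Lemma \ref{lem1}), $\rho(T)\le\rho(W)$. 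Thus it is enough to prove $\rho(W)<1$.

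Next I would invoke Lemma \ref{lem4}: it suffices to exhibit a vector $v>0$ with $Wv<v$, equivalently $(|\widehat{N}|+|\Omega_1-A_1|)v<\langle\widehat{M}+\Omega_1\rangle v$. Substituting the expressions above and cancelling the common term $|\widehat{N}|v$, this reduces to
\[
(D_1+\Omega_1-|\Omega_1-D_1|-2|B|)\,v>0.
\]
In case (1), where $\Omega_1\ge D_1$, the left-hand side equals $2(D_1-|B|)v=2\langle A_1\rangle v$, so any $v>0$ with $\langle A_1\rangle v>0$ works, and such $v$ exists because $\langle A_1\rangle$ is a nonsingular $M$-matrix. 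In case (2), where $\Omega_1<D_1$, it equals $2(\Omega_1-|B|)v$, so I need $(\Omega_1-|B|)v>0$; this follows from the hypothesis that $2\Omega_1-D_1-|B|$ is an $M$-matrix, because $2\Omega_1-D_1-|B|\le\Omega_1-|B|$ (their difference is the nonpositive diagonal $\Omega_1-D_1$) and both are $Z$-matrices, so Lemma \ref{lem1} upgrades $\Omega_1-|B|$ to an $M$-matrix, which admits $v>0$ with $(\Omega_1-|B|)v>0$. In either case Lemma \ref{lem4} gives $\rho(W)<1$, hence $\rho(T)<1$ and the convergence of $\{s^{(k)}\}$ to $s^*$.

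I expect the main obstacle to be the identity $\langle\widehat{M}+\Omega_1\rangle=\langle\widehat{M}\rangle+\Omega_1$: it requires the diagonal of $\widehat{M}=M_1+I-L_1$ to be positive, since otherwise adding the positive diagonal $\Omega_1$ could only shrink the comparison matrix and the bound $|(\widehat{M}+\Omega_1)^{-1}|\le\langle\widehat{M}+\Omega_1\rangle^{-1}$ would no longer feed the inequality in the needed direction; I would verify this positivity directly from the given $H$-compatible splitting (it holds for the Jacobi, Gauss--Seidel, SOR and AOR instances listed after Method \ref{mthd1}). A secondary point worth flagging is that condition (2) is only a convenient strengthening of the sharp requirement that $\Omega_1-|B|$ be an $M$-matrix; the argument above shows the stated condition is sufficient for convergence.
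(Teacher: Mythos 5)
Your proposal is correct and follows essentially the same route as the paper: both reduce convergence to $\rho(T)<1$ for the iteration matrix $T$ from Theorem \ref{thm2}, pass to the comparison matrix via Lemma \ref{lem1}, and apply Lemma \ref{lem4} with a positive vector $v$, arriving in the two cases at exactly the quantities $2\langle A_{1}\rangle v>0$ and $2(\Omega_{1}-|B|)v>0$ that the paper obtains (the paper writes the latter as $(\langle A_{1}\rangle+2\Omega_{1}-D_{1}-|B|)v$, which is the same thing since $\langle A_{1}\rangle=D_{1}-|B|$). Your handling of case (2) is in fact tidier than the paper's, which silently switches from the stated hypothesis ``$2\Omega_{1}-D_{1}-|B|$ is an $M$-matrix'' to ``$\langle A_{1}\rangle+2\Omega_{1}-D_{1}-|B|$ is an $M$-matrix'' inside the proof, whereas you derive the needed condition on $\Omega_{1}-|B|$ directly from the stated one via the $M$-matrix comparison in Lemma \ref{lem1}.
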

	\begin{proof}
		Let $A_{1}=M_{1}-N_{1}=(M_{1}+I- L_1)-(N_{1}+I- L_1)$ and it holds that 
		\\ $\langle A_{1} \rangle\leq  \langle M_{1} + I- L_1 \rangle \leq diag(M_{1}+I- L_1)$, 
		$(M_{1}+I- L_1)$ is an $H_{+}$-matrix. and it holds that 
		$$ |(\Omega_{1}+M_{1}+I- L_1)^{-1}| \leq (\Omega_{1}+\langle M_{1}\rangle+I- L_1)^{-1}.$$
		From Theorem $\ref{thm2}$,	let $T=|(M_{1}+{I- L_1+\Omega_{1}})^{-1}|(|N_{1}+I- L_1|+ |\Omega_{1}-A_{1}|)$, then
		\noindent 
		\begin{equation*}
			\begin{split}
				T & =|(  M_{1} +\Omega_{1}+I- L_1)^{-1}|[|N_1+I- L_1|+|\Omega_{1}-A_{1}|]\\
				&\leq (\langle M_{1}\rangle+\Omega_{1}+I- L_1)^{-1}[|N_1+I- L_1|+|\Omega_{1}-A_{1}|]\\
				&\leq (\langle M_{1}\rangle+\Omega_{1}+I- L_1)^{-1}[|N_1+I- L_1|+|\Omega_{1}-D_1+ L_1+ U_1|]\\
				&\leq (\langle M_{1}\rangle+\Omega_{1}+I- L_1)^{-1}[(\langle M_{1}\rangle+\Omega_{1}+I- L_1)-(\langle M_{1}\rangle+\Omega_{1}+I- L_1)\\&+|N_1+I- L_1|+|\Omega_{1}-D_1|+| L_1+ U_1|].\\
			\end{split}
		\end{equation*}	
		Case 1. $\Omega_{1} \geq D_{1} $,
		\begin{equation*}
			\begin{split}	
				&\leq I-(\langle M_{1}\rangle+\Omega_{1}+I- L_1)^{-1}[(\langle M_{1}\rangle+I- L_1)-|N_1+I- L_1|+D_1-| L_1+ U_1|]\\
				&\leq I-2(\Omega_{1}+\langle M_{1}\rangle+I- L_1)^{-1}\langle A_{1}\rangle.\\		
			\end{split}
		\end{equation*}
		Since $\langle A_{1}\rangle$ is an $M$-matrix, then there exists a positive vector $v~ \textgreater ~0$ such that $$\langle A_{1}\rangle v ~\textgreater ~0.$$
		Therefore $$Tv\leq (I-2(\Omega_{1}+\langle M_{1}\rangle+I- L_1)^{-1}\langle A_{1}\rangle)v~ \textless~  v.$$
		By using the Lemma $\ref{lem4}$, we are able to determine that $\rho(T)\textless  1$. \\
		Case 2. $\Omega_{1}~ \textless~  D_{1}$ and $\langle A_{1} \rangle +2\Omega_{1}-D_{1}-|B|$ is an $M$-matrix. Then,
		\begin{equation*}
			\begin{split}
				T 
				&\leq (\langle M_{1}\rangle+\Omega_{1}+I- L_1)^{-1}[(\langle M_{1}\rangle+\Omega_{1}+I- L_1)-(\langle M_{1}\rangle+\Omega_{1}+I- L_1)\\&+|N_1+I- L_1|+|\Omega_{1}-D_1|+| L_1+ U_1|]\\
				&\leq I-(\langle M_{1}\rangle+\Omega_{1}+I- L_1)^{-1}[(\langle M_{1}\rangle+I- L_1)-|N_1+I- L_1|\\&+2\Omega_{1}-D_1-| L_1+ U_1|].\\
			\end{split}
		\end{equation*}	
		Since $[\langle A_1 \rangle+2\Omega_{1}-|D_1|-| L_1+ U_1|]$ is an $M$-matrix. Then there exists a positive vector $v~ \textgreater ~0$ such that $$[\langle A_1 \rangle+2\Omega_{1}-|D_1|-| L_1+ U_1|] v ~\textgreater ~0.$$
		Therefore $$Tv\leq I-(\Omega_{1}+\langle M_{1}\rangle+I- L_1)^{-1}[\langle A_1 \rangle+2\Omega_{1}-D_1-| L_1+ U_1|] v~  \textless ~ v.$$
		By using the Lemma $\ref{lem4}$, we are able to determine that $\rho(T)\textless  1$. \\
		Because of this, according to Theorem $\ref{thm2}$, the iteration sequence $\{s^{(k)}\}^{+\infty}_{k=1}$ generated by Method $\ref{mthd1}$ converges to $s^{*}$ for  any initial vector $s^{(0)}.$
	\end{proof}
	\section{Numerical examples}
	IT denotes the number of iteration steps, while CPU is the CPU time in seconds. This section includes two numerical examples to show the efficiency of our proposed method.
	We consider the LCP$(q, A_{1})$, which always has a unique solution. Let $A_{1}= P_{1}+\delta_{1}I$ and $q=-A_{1}z^*$, where $z^*=(1,2,\ldots,1,2,\ldots)\in R^n$ is the unique solution of Equation ($\ref{eq1}$). Let $s^{(0)}=(1,0,\ldots 1,0,\ldots)^T\in R^{n}$ be initial vector and set $\epsilon=10^{-5}$.
	The proposed methods (NAMGS and NAMSOR) are compared to the modulus-based   Gauss-Seidel (MGS) method  and the successive over-relaxation (MSOR) method   \cite{Bai2010}, which are effective in solving LCP$(q, A_{1})$.\\ 
	\noindent Matlab version 2021a on an Acer Desktop (Intel(R) Core(TM) i7-8700 CPU @ 3.2 GHz 3.19GHz, 16.00GB RAM) is used  for all calculations.
	The numerical results for the new accelerated modulus-based iteration method and modulus-based matrix splitting method in \cite{Bai2010} are listed in Tables 1 and 2.
	\begin{ex}\label{ex1}
		The system matrix $A_{1} \in {R}^{n\times n}$  is generated by $A_{1}= P_{1}+\delta_{1}I$, 
		where $\delta_{1}$  is nonnegative real parameter and \\
		$P_{1}=
		\begin{bmatrix}
			L_{1} & -I_{2} &0 & \ldots &0  \\
			-I_{2} & L_{1} & -I_{2} &\ldots &0  \\  
			0& -I_{2} & L_{1} &-I_{2}   &0\\ 
			0& \ldots & -I_{2} & \ddots & -I_{2}\\ 
			0& \ldots & 0 &-I_{2} &L_{1}\\ 
		\end{bmatrix} $,
		$L_{1}=
		\begin{bmatrix}
			4 &-1  & \ldots&\ldots &0   \\
			-1&  4 & -1 & \ldots& 0  \\  
			0&  -1& 4 &-1 &  0\\  
			0& \ldots &-1  & \ddots & -1\\ 
			0& \ldots & \ldots &-1 &4\\ 
		\end{bmatrix} $ ,\\
		\noindent where $P_{1}\in {R}^{n\times n}$, $L_{1}\in {R}^{m\times m}$ 
		and $I_{2}$ is the identity matrix of order $m$.  
	\end{ex}
	\begin{table}[h!]
		\caption{Results for  MGS and MSOR methods \cite{Bai2010} and  NAMGS and NAMSOR methods,  $\delta_{1}=4$.}
		\centering
		\begin{tabular}{@{}|l|l| l l l l l l|@{}}
			\hline
			&\textbf{n} &${100}$   & ${900}$   & ${2500}$ & ${3600}$& ${6400}$ & ${10000}$ \\ [.9ex] 
			\hline\hline
			$\textbf{MGS}$&$\textbf{IT} $&36 & 40 & 41 & 41& 42 & 42\\
			$\alpha=1$	& $\textbf{CPU}$ & 0.0039 & 0.0293  & 0.2699 &0.6400&2.0785&4.5712 \\
			&$\textbf{Res} $&9.7e-06 &8.0e-06 &7.9e-06 &8.9e-06&7.4e-06&8.4e-06\\
			$\textbf{NAMGS}$&$\textbf{IT}$&16  &  17&17  &17&18&18\\
			$\alpha_1=1$	&$\textbf{CPU} $ & 0.0024 & 0.0146 &0.1089 &0.2688&0.8179&2.0175\\
			&$\textbf{Res} $&6.3e-06 &6.3e-06 &8.5e-06&9.5e-06& 4.5e-06&5.1e-06\\
			\hline 
			
			$\textbf{MSOR}$&$\textbf{IT} $&15 & 17 & 18 & 18& 18 &19\\
			$\alpha=0.85$	& $\textbf{CPU}$ & 0.0034 & 0.0141  & 0.1201 &0.2903&0.7826&2.0791 \\
			&$\textbf{Res} $&9.5e-06 &7.6e-06 &5.3e-06 &6.5e-06&8.9e-06&4.3e-06\\
			$\textbf{NAMSOR}$&$\textbf{IT}$&12  &  12&13  & 13 & 13 & 13\\
			$\alpha_1=0.91$	&$\textbf{CPU} $ & 0.0028 & 0.0116 &0.0905 &0.2017&0.6467&2.1918\\
			&$\textbf{Res} $&2.7e-06 &7.6e-06 &3.2e-06&3.7e-06& 4.8e-06&5.9e-06\\
			\hline 
		\end{tabular}
	\end{table}
	\begin{ex} The system matrix $A_{1}\in {R}^{n\times n}$  is generated by $A_{1}= P_{1}+\delta_{1}I$, 
		where $\delta_{1}$  are nonnegative real parameters and \\ 
		$P_{1}=
		\begin{bmatrix}
			L_{1} & -0.5I_{2} &0 & \ldots  \\
			-1.5I_{2} & L_{1} & -0.5I_{2} &\ldots  \\  
			\vdots& -1.5I_{2} & \ddots &-0.5I_{2}   \\ 
			0& \ldots & -1.5I_{2} & L_{1} \\  
		\end{bmatrix} $,
		$L_{1}=
		\begin{bmatrix}
			4 &-0.5  & \ldots&\ldots  \\
			-1.5&  4 & -0.5 & \ldots \\  
			\vdots&  -1.5& \ddots &-0.5 \\  
			0& \ldots &-1.5  & 4\\ 
		\end{bmatrix} $ \\ \ \\ $P_{1}\in {R}^{n\times n}$, $L_{1}\in {R}^{m\times m}$
		and
		$I_{2}$ is the identity matrix of order $m$.
	\end{ex}
	\begin{table}[h!]
		
		\caption{Results for  MGS and MSOR methods \cite{Bai2010} and  NAMGS and NAMSOR  methods,    $\delta_{1}=4$.}
		\centering
		\begin{tabular}{@{}|l|l| l l l l l l|@{}}
			\hline
			&\textbf{n} &${100}$   & ${900}$   & ${2500}$ & ${3600}$& ${6400}$ & ${10000}$ \\ [.9ex] 
			\hline\hline
			$\textbf{MGS}$&$\textbf{IT} $&24 & 26 & 26& 26& 27 &27\\
			$\alpha=1$	& $\textbf{CPU}$ & 0.0034 & 0.0208  & 0.1707 &0.3961&1.1542& 2.9471\\
			&$\textbf{Res} $&9.0e-06 &6.5e-06 &8.7e-06 &9.6e-06&6.5e-06&7.3e-06\\
			$\textbf{NAMGS}$&$\textbf{IT}$&12  &  12&13  &13&13&13\\
			$\alpha_1=1$	&$\textbf{CPU} $ & 0.0026 & 0.0141 &0.0837 &0.1998&0.2987&1.4461\\
			&$\textbf{Res} $&3.8e-06 &8.4e-06 &3.1e-06&3.5e-06& 4.4e-06&5.3e-06\\
			\hline 
			$\textbf{MSOR}$&$\textbf{IT} $&14 & 14 & 15 & 15& 15 &15\\
			$\alpha=0.88$	& $\textbf{CPU}$ & 0.0029 & 0.0123  & 0.1040 &0.2505&0.6543&1.6363 \\
			&$\textbf{Res} $&3.8e-06 &8.2e-06 &4.1e-06 &4.6e-06&5.5e-06&6.3e-06\\
			$\textbf{NAMSOR}$&$\textbf{IT}$&8  &  9&9 &9&9&9\\
			$\alpha_1=0.88$	&$\textbf{CPU} $ & 0.0022 & 0.0096 &0.0655 &0.1409&0.0441&1.7340\\
			&$\textbf{Res} $&2.2e-06 &1.5e-06 &2.7e-06&3.7e-06& 4.6e-06&5.9e-06\\
			\hline 
		\end{tabular}
	\end{table}
	\noindent From Tables 1 and 2, we can observe that the iteration steps required by our proposed NAMGS and NAMSOR methods have  lesser number of iteration steps, faster processing (CPU time) and a greater computational efficiency than  the MGS and MSOR methods in \cite{Bai2010} respectively.
	\section{Conclusion} 
	In this article, we present a class of new accelerated modulus-based iteration methods for solving the LCP$(q, A _1)$ based on matrix splitting. The large and sparse structure of $A_ 1$ is preserved throughout the iteration process by this iteration form. Additionally,  when system matrix $A_{1}$ is an $H_{+}$-matrix, we demonstrate some convergence conditions. Finally, two numerical examples are provided to illustrate the effectiveness of the proposed methods.\\ \  \\
	\noindent \textbf{Conflict of interest} There are no conflicts of interest declared by the authors.\\  \ \\
	\noindent \textbf{Acknowledgment.}  The first author wishes to thank the University Grants Commission (UGC), Government of India, under the SRF fellowship program No. 1068/(CSIR-UGC NET DEC. 2017).
		\bibliographystyle{plain} 
	\bibliography{bharatt}
\end{document}